\newtheorem{theorem}{Theorem}[section]
\newtheorem{lemma}[theorem]{Lemma}
\newtheorem{corollary}[theorem]{Corollary}
\newtheorem{conjecture}[theorem]{Conjecture}
\newtheorem{claim}{Claim}
\newtheorem{problem}[theorem]{Problem}
\theoremstyle{definition}
\begin{document}
	
\title{A note on shortest circuit cover of 3-edge colorable cubic signed graphs\thanks{The work was supported by NNSF of China (No. 12071453) and  Anhui Initiative in Quantum Information Technologies (AHY150200) and the National Key R and D Program of China(2020YFA0713100).}}
\author{Ronggui Xu$^a$, \quad Jiaao Li$^b$ \quad Xinmin Hou$^{a,c}$\\
\small $^a$School of Mathematical Sciences\\
\small University of Science and Technology of China, Hefei, Anhui 230026, China.\\
\small $^b$School of Mathematical Sciences\\
\small Nankai University, Tianjin 300071, China\\
\small $^{c}$CAS Key Laboratory of Wu Wen-Tsun Mathematics\\
\small University of Science and Technology of China, Hefei, Anhui 230026, China.
}
\date{}
\maketitle

\begin{abstract}
A {sign-circuit cover} $\mathcal{F}$ of a signed graph $(G, \sigma)$ is a family of sign-circuits which covers all edges of $(G, \sigma)$. The shortest sign-circuit cover problem was initiated by M\'a$\check{\text{c}}$ajov\'a, Raspaud, Rollov\'a, and \v{S}koviera (JGT 2016) and received many attentions in recent years.  In this paper, we show that every flow-admissible 3-edge colorable cubic signed graph $(G, \sigma)$ has a sign-circuit cover with length at most $\frac{20}{9} |E(G)|$.
	
\end{abstract}


\section{Introduction}
In this paper,  graphs may have parallel edges and loops. A {\it circuit} is a connected $2$-regular graph. A  graph is {\it even} if every vertex has even degree, and
an Eulerian graph is a connected even graph. A {\it circuit cover} $\mathcal{C}$ of a graph is a family of circuits which covers all edges of $G$. We call $\mathcal{C}$ a {\it circuit $k$-cover} of $G$ if $\mathcal{C}$ covers every edge of $G$ exactly $k$ times. The {\it length} of a circuit cover $\mathcal{C}$ is defined as $\ell(\mathcal{C})=\sum_{C\in \mathcal{C}}|E(C)|$.
Determining the shortest length of a circuit cover of a graph $G$ (denoted by $scc(G)=\min \{\ell(\mathcal{C}): \mathcal{C}~\text{is a circuit cover}\}$) is a classic optimization problem initiated by Itai, Lipton, Papadimitriou, and Rodeh \cite{1981Covering}. Thomassen~\cite{1997complexity} showed that it is NP-complete to determine
whether a bridgeless graph has a circuit cover with length at most $k$ for a given integer $k$. A well-known conjecture, the Shortest Circuit Cover Conjecture, was proposed by Alon and Tarsi~\cite{1985covering} as follows.
\begin{conjecture}[Shortest Circuit Cover Conjecture]\label{CONJ: c1}
 For any $2$-edge-connected graph $G$, $scc(G) \leq \frac{7}{5}|E(G)|$.
\end{conjecture}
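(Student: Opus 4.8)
The conjecture is stated for all $2$-edge-connected graphs, but its difficulty is concentrated in the cubic case: degree-two vertices can be suppressed and vertices of degree at least four split into cubic ones, and---modulo the standard but delicate bookkeeping needed to control the ratio $scc(G)/|E(G)|$ under these operations---it suffices to treat bridgeless cubic graphs. So I would first restrict to a $2$-edge-connected cubic graph $G$ and split into two cases. If $G$ is $3$-edge-colorable then it admits a nowhere-zero $4$-flow; its three color classes are perfect matchings $M_1,M_2,M_3$, and each $M_i\cup M_j$ is a $2$-factor, that is, a disjoint union of circuits. Taking the two $2$-factors $M_1\cup M_2$ and $M_1\cup M_3$ covers $E(G)$ with total length $|E(G)|+|M_1|=\tfrac{4}{3}|E(G)|$, since every perfect matching of a cubic graph uses exactly $\tfrac13|E(G)|$ edges. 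As $\tfrac43<\tfrac75$, this settles the colorable case with room to spare, and the entire problem reduces to the \emph{snarks}, the non-$3$-edge-colorable bridgeless cubic graphs.

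For a snark I would turn to nowhere-zero flows of higher modulus. Seymour's $6$-flow theorem supplies a nowhere-zero $\mathbb{Z}_2\times\mathbb{Z}_3$-flow, and the layering argument of Alon and Tarsi then yields a circuit cover of length at most $\tfrac53|E(G)|$; this is the bound one obtains essentially for free. To drive the constant down to $\tfrac75$ I would try to exploit a nowhere-zero $5$-flow, whose existence is Tutte's (still open) conjecture: partition the edges into the four value classes of the flow, assemble even subgraphs from them, and regroup these so that only a $\tfrac25$-fraction of $E(G)$ is covered twice while the rest is covered once. The target $\tfrac75|E(G)|$ is exactly what such a perfectly balanced $5$-flow cover would produce on the Petersen graph, where $scc=21=\tfrac75\cdot 15$; thus the Petersen graph is the extremal certificate the argument must meet with equality, and a parallel route through a suitably structured circuit double cover---from which one selects a cheap sub-family already covering every edge---would have to be equally sharp.

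The hard part, and the reason this remains a celebrated open conjecture rather than a routine lemma, is precisely that the bound is tight and leaves no slack on snarks. The flow-to-cover translations above are wasteful: a single nowhere-zero $k$-flow inevitably forces a sizeable part of $E(G)$ to be covered twice, and no known regrouping of a $5$-flow---let alone the $6$-flow guaranteed by Seymour---keeps the doubly-covered portion as small as $\tfrac25|E(G)|$ on every snark. Indeed the best unconditional bounds known for general bridgeless graphs, and even for graphs assumed to carry a nowhere-zero $5$-flow, still sit strictly above $\tfrac75|E(G)|$. Reaching $\tfrac75$ therefore seems to demand genuinely structural information about snarks beyond the mere existence of a flow, and I expect the true obstacle to be the simultaneous, uniform control of all snarks at once, since the Petersen graph shows that any successful method must be asymptotically optimal and cannot afford the constant-factor losses built into the current flow-based toolkit.
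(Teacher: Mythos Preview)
The statement you were asked to prove is not a theorem of the paper at all: it is the Shortest Circuit Cover Conjecture of Alon and Tarsi, stated in the introduction as background and explicitly labeled a conjecture. The paper makes no attempt to prove it; indeed it immediately records that the best known general bound is $\tfrac{5}{3}|E(G)|$ (Theorem~\ref{THM: grneral-upper}), well above $\tfrac{7}{5}$.

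Your write-up is not really a proof proposal either, and you seem to know this: after the easy $3$-edge-colorable case (which you handle correctly, and which incidentally is the only case that actually admits a clean $\tfrac{4}{3}$ bound), you explain why the snark case resists all known flow-based attacks and conclude that ``this remains a celebrated open conjecture.'' That assessment is accurate. One technical caveat on your reduction paragraph: the passage from general $2$-edge-connected graphs to cubic graphs is \emph{not} innocuous for this problem, since suppressing degree-two vertices and splitting high-degree vertices can move the ratio $scc(G)/|E(G)|$ in the wrong direction; the conjecture is genuinely stated for all bridgeless graphs, and the cubic case is merely where the known extremal example (Petersen) lives, not a formal reduction target. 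Apart from that, there is nothing to compare --- the paper offers no proof, and you correctly declined to manufacture one.
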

The upper bound is achieved by the Petersen graph.
Jamshy and Tarsi~\cite{1992covers} proved that Conjecture~\ref{CONJ: c1} implies the well-known Cycle Double Cover Conjecture proposed by Seymour~\cite{1979sums} and Szekeres~\cite{1973decomp}. The best known general result about Conjecture~\ref{CONJ: c1} is  obtained by Bermond, Jackson, Jaeger~\cite{1983covering} and Alon, Tarsi~\cite{1985covering}, independently.
\begin{theorem}[Bermond, Jackson and Jaeger~\cite{1983covering}, Alon and Tarsi~\cite{1985covering}]\label{THM: grneral-upper}
Let $G$ be a $2$-edge-connected graph. Then $scc(G) \leq \frac{5}{3}|E(G)|$.
\end{theorem}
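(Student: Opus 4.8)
The plan is to reduce the statement to the theory of nowhere-zero flows, which control circuit covers very efficiently. The first ingredient I would establish is the \emph{$4$-flow bound}: if a graph $H$ admits a nowhere-zero $4$-flow, then $scc(H)\le \tfrac43|E(H)|$. Indeed, a nowhere-zero $4$-flow is the same as a nowhere-zero $\mathbb{Z}_2\times\mathbb{Z}_2$-flow $(\varphi_1,\varphi_2)$; setting $H_1=\mathrm{supp}(\varphi_1)$, $H_2=\mathrm{supp}(\varphi_2)$ and $H_3=\mathrm{supp}(\varphi_1+\varphi_2)$ gives three even subgraphs with $H_1\cup H_2\cup H_3=E(H)$ in which every edge lies in exactly two of them. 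Each even subgraph decomposes into circuits, so the two shortest of $H_1,H_2,H_3$ form a circuit cover of total length at most $\tfrac23(|H_1|+|H_2|+|H_3|)=\tfrac23\cdot 2|E(H)|=\tfrac43|E(H)|$.

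The second ingredient is Seymour's $6$-flow theorem: a $2$-edge-connected graph $G$ is bridgeless, hence has a nowhere-zero $6$-flow, that is, a nowhere-zero $\mathbb{Z}_2\times\mathbb{Z}_3$-flow $(\alpha,\beta)$. Let $A=\mathrm{supp}(\alpha)$, an even subgraph of $G$, and observe that on every edge outside $A$ the $\mathbb{Z}_3$-component $\beta$ is nonzero. Contracting $A$, the flow $\beta$ descends to a nowhere-zero $\mathbb{Z}_3$-flow on $G/A$, whose edge set is exactly $E(G)\setminus A$; in particular $G/A$ has a nowhere-zero $4$-flow. Applying the $4$-flow bound inside $G/A$ produces three even subgraphs $\bar B_1,\bar B_2,\bar B_3\subseteq E(G)\setminus A$ covering $E(G)\setminus A$ with each such edge lying in exactly two of them.

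The final, and hardest, step is to lift this cover from $G/A$ back to $G$ and to pay for the edges of $A$. Each $\bar B_i$ is even in $G/A$ but, viewed in $G$, has a set $T_i$ of vertices (lying inside the Eulerian components of $A$) of odd degree; one repairs parity by adding to $\bar B_i$ a $T_i$-join $S_i\subseteq A$, obtaining a genuine even subgraph $\hat B_i=\bar B_i\cup S_i$ of $G$. The key structural fact is that, since every edge of $E(G)\setminus A$ lies in exactly two of the $\bar B_i$, each vertex lies in an even number of the $T_i$ and $T_1\triangle T_2=T_3$; hence I may take $S_3=S_1\triangle S_2$, so that the three repair sets are governed by just two joins. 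Taking the two shortest of $\hat B_1,\hat B_2,\hat B_3$ covers $E(G)\setminus A$, and one extra pass over the even subgraph $A$ covers the remaining edges. The main obstacle is purely quantitative: one must route the $T_i$-joins inside the Eulerian components of $A$ so that their total size is charged against $|A|$. If the repair edges contribute at most about $|A|$, then the two shortest lifts cost at most $\tfrac43|E(G)\setminus A|+\tfrac23|A|$ and covering $A$ costs $|A|$, giving $scc(G)\le \tfrac43|E(G)|+\tfrac13|A|\le\tfrac53|E(G)|$ since $|A|\le|E(G)|$. Making this charging tight—balancing the routing inside $A$ against the savings from the $4$-flow bound—is where the real work lies, and it is what pins the constant at $\tfrac53$ rather than $\tfrac43$.
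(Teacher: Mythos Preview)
The paper does not give its own proof of this theorem: it is quoted as a classical background result of Bermond--Jackson--Jaeger and Alon--Tarsi, with no argument supplied. So there is nothing in the paper to compare your proposal against.

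That said, your plan is exactly the classical route: use Seymour's $6$-flow theorem to find an even subgraph $A$ with $G/A$ admitting a nowhere-zero $4$-flow, exploit the $4$-flow to get three even subgraphs of $G/A$ in which every edge of $E(G)\setminus A$ lies in exactly two, lift them back with $T_i$-joins $S_i\subseteq A$, and add one pass over $A$. Your arithmetic is also correct: the scheme yields $\tfrac{5}{3}|E(G)|$ precisely when $|S_1|+|S_2|+|S_3|\le |A|$.

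The genuine gap is that you do not establish this last inequality, and it is not automatic. From $S_3=S_1\triangle S_2$ one only gets $|S_1|+|S_2|+|S_3|=2|S_1\cup S_2|$, and the naive bounds (each $|S_i|\le \tfrac12|A|$, or complementing one join inside the Eulerian graph $A$) give at best $\sum_i|S_i|\le \tfrac32|A|$, which plugs in to yield only $2|E(G)|$, not $\tfrac53|E(G)|$. Pinning the constant at $\tfrac53$ requires a sharper choice of the joins (or an equivalent reorganisation of the cover) that forces $|S_1\cup S_2|\le \tfrac12|A|$; you correctly flag this as ``where the real work lies'' but do not supply the mechanism. Without that step the argument does not close.
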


Several improvements of this upper bound for cubic graphs $G$ have been made in literature. Specifically, Jackson~\cite{Jack94} showed
that $scc(G)\le\frac{64}{39}|E(G)|$ and Fan~\cite{Fan94} later showed that $scc(G)\le \frac{44}{27} |E(G)|$, Kaiser, Tom{\'a}{\v{s}}, Kr{\'a}l, Lidick{\`y}, Nejedl{\`y}, and {\v{S}}{\'a}mal~\cite{Kaiser} improved Fan's result to $scc(G)\le\frac{34}{21} |E(G)|$ and Hou and Zhang \cite{2012HouZhang} proved that $scc(G)\le\frac 85 |E(G)|$ if $G$ has girth at least 7 and $scc(G)\le\frac{361}{225}|E(G)|$ if all 5-circuits of $G$ are disjoint. Recently, Luko\v{t}ka \cite{2020Lukotka} showed that $scc(G)\le\frac{212}{135} |E(G)|$ for all $2$-edge-connected cubic graphs $G$.

A {\it signed} graph $(G, \sigma)$ is a graph $G$ associated with a mapping $\sigma : E(G)\rightarrow   \{+1, -1\}$. An edge $e\in E(G)$ is {\it positive} if $\sigma(e)=1$ and {\it negative} if $\sigma(e)=-1$. A signed graph $G$ is called {\it positive} if $G$ contains even number of negative edges and otherwise called {\it negative}. In a signed graph, a circuit with an even number of negative edges is called a {\it balanced circuit}, and otherwise we call it an {\it unbalanced circuit}. A {\it barbell} is a signed graph consisting of two unbalanced circuits joined by a (possibly trivial) path, intersecting with the circuits only at ends. If the path in a barbell is trivial, the barbell is called a {\it short barbell}; otherwise, it is a {\it long barbell}.
A balanced circuit or a barbell is called a {\it sign-circuit} of a signed graph. A {\it sign-circuit cover} $\mathcal{F}$ of a signed graph is a family of sign-circuits which covers all edges of $(G, \sigma)$. In fact, it is well-known that a signed graph has a sign-circuit cover if and only if each edge lies in a sign-circuit, which is equivalent to the fact that the signed graph admits a nowhere-zero integer flow, so-called, {\it flow-admissible}. (Readers may refer to~\cite{1983Nowhere} for details).
The shortest length of a sign-circuit cover of a signed graph $(G, \sigma)$ is also denoted by $scc(G)$. The shortest sign-circuit cover problem was initiated by M\'a$\check{\text{c}}$ajov\'a, Raspaud, Rollov\'a, and \v{S}koviera~\cite{2016Circuit} and received many attentions in recent years.  It is a major open problem for the optimal upper bound of shortest sign-circuit cover in signed graphs.
\begin{problem}\label{problem5/3}
  What is the optimal constant $c$ such that $scc(G)\le c\cdot|E(G)|$ for every flow-admissible signed graph $(G,\sigma)$ ?
\end{problem}

 As remarked in \cite{2016Circuit}, the signed Petersen graph $(P,\sigma)$ whose negative edges induce a circuit of length five has $scc(P)=\frac{5}{3}|E(P)|$, which indicates $c\ge \frac{5}{3}$.  We list some of known results related to Problem~\ref{problem5/3}.

\begin{itemize}
  \item[(1)] $c\le 11$ by M\'a$\check{\text{c}}$ajov\'a, Raspaud, Rollov\'a, and \v{S}koviera~\cite{2016Circuit}.
  \item[(2)] $c\le \frac{14}{3}$ by Lu, Cheng, Luo, and Zhang~\cite{Cheng19}.
  \item[(3)] $c\le \frac{25}{6}$ by Chen and Fan~\cite{Chen18}.
  \item[(4)] $c\le \frac{11}{3}$  by Kaiser, Luko\v{t}ka, M\'a$\check{\text{c}}$ajov\'a, and Rollov\'a~\cite{Kaiser19}.
  \item[(5)] $c\le \frac{19}{6}$ by Wu and Ye~\cite{2018Minimum}.
\end{itemize}

For any flow-admissible $2$-edge-connected cubic signed graph $(G,\sigma)$, Wu and Ye \cite{2018Circuit} obtained a better upper bound that $scc(G)\le \frac{26}9|E(G)|$.
 In this article, we focus on the shortest sign-circuit cover of
3-edge colorable cubic signed graphs and prove the following theorem.
\begin{theorem}\label{THM: main}
	Every flow-admissible $3$-edge colorable cubic signed
	 graph $(G,\sigma)$ has a sign-circuit cover with length at most $\frac{20}{9} |E(G)|$.
\end{theorem}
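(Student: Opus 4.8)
The plan is to start from a $2$-factor decomposition of the cubic graph, which gives a double cover for free, and then to repair the unbalanced components of the $2$-factors by turning them into barbells. First observe that $G$ is automatically bridgeless, because a cubic graph with a bridge is not $3$-edge-colorable; we may assume $G$ is connected, and, after switching, that the number of negative edges is minimum. Then the three edges at any vertex contain at most one negative edge, so the set $E^-$ of negative edges is a matching, and in particular $|E^-|\le\tfrac13|E(G)|$. Fix a proper $3$-edge-coloring of $G$ with color classes the perfect matchings $M_1,M_2,M_3$ and, for $\{i,j,k\}=\{1,2,3\}$, set $F_k=M_i\cup M_j$. Each $F_k$ is a $2$-factor whose components are circuits of even length (allowing digons from pairs of parallel edges), an edge of color $c$ lies in exactly the two $2$-factors $F_k$ with $k\ne c$, and hence $|E(F_1)|+|E(F_2)|+|E(F_3)|=2|E(G)|$, the family of all components of $F_1,F_2,F_3$ covering every edge exactly twice.

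Call a component of some $F_k$ \emph{good} when it is balanced and \emph{bad} when it is unbalanced; the good ones are sign-circuits already, so only the bad ones must be dealt with. I would pair the bad components and replace each pair by a sign-circuit, using two devices. First, a symmetric-difference move: if a bad component of $F_k$ and a bad component of $F_{k'}$ with $k\ne k'$ share an edge, their symmetric difference is a balanced even subgraph, which may be decomposed into balanced circuits and substituted for both; the edges left uncovered are exactly the shared ones, all of the single color not in $\{k,k'\}$, to be re-covered separately, and this step cuts down the number of bad components. Second, barbell joining: the number of remaining bad components of $F_k$ has the same parity as $|M_i\cap E^-|+|M_j\cap E^-|$, so summing over $k$ and using that each negative edge lies in two $2$-factors, at most two of $F_1,F_2,F_3$ have an odd number of them; thus the remaining bad components can be matched with every pair inside a single $F_k$ except at most one cross pair, and a matched pair $\{C,C'\}$ is turned into a barbell $C\cup P\cup C'$ by a shortest path $P$ of $G$ internally disjoint from $C\cup C'$ (flow-admissibility, i.e.\ that every edge is on a sign-circuit, disposes of the lone cross pair). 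Outputting all good components, all these barbells, the balanced circuits from the symmetric-difference moves, and short sign-circuits covering the leftover shared edges yields a sign-circuit cover $\mathcal F$ of $(G,\sigma)$, and since within each barbell the pieces $C,C',P$ are edge-disjoint,
\[
\ell(\mathcal F)\ \le\ 2|E(G)|+\sum_{\text{matched }\{C,C'\}}|E(P)|+(\text{cost of re-covering the leftovers}).
\]

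Everything therefore comes down to bounding the surplus over $2|E(G)|$ by $\tfrac29|E(G)|$, and this is the step I expect to be the real obstacle. The naive estimate is hopeless: since $E^-$ is a matching with up to $\tfrac13|E(G)|$ edges and each negative edge may sit in a bad circuit in each of its two $2$-factors, there can be on the order of $\tfrac23|E(G)|$ bad circuits before cleanup, which no amount of short joining paths could absorb. The crux of the proof must be a careful argument — using both the $3$-edge-colorability and the matching structure of $E^-$ in an essential way — that first reduces the number of surviving bad circuits to a small multiple of $|E(G)|$ (through the symmetric-difference move above, a judicious choice of the $3$-edge-coloring, and local edge-swaps that re-route a bad circuit through the third color class), and then shows the survivors can be matched by single edges or very short paths with the leftover matching edges recoverable at comparable cost. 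A discharging argument charging the cost of each joining path and each re-covered leftover to the edges of the bad circuits it involves — each edge paying only $O(1)$ — is the natural vehicle; forcing the final ratio down to exactly $\tfrac{20}{9}$, rather than the larger slack one must accept for general $2$-edge-connected cubic graphs, is precisely where $3$-edge-colorability is indispensable, and is the heart of the difficulty. The degenerate configurations (digons from parallel edges, nontrivial $2$-edge-cuts) would be handled as small base cases meeting the same ratio.
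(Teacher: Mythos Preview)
Your proposal is not a proof: you set up the natural $2$-factor double cover and then explicitly defer the only step that matters, saying ``this is the step I expect to be the real obstacle'' and ``the crux of the proof must be a careful argument~\ldots''. Nothing you wrote establishes the surplus bound $\tfrac{2}{9}|E(G)|$; the discharging scheme is only named, not executed, and your own back-of-the-envelope count (up to $\tfrac{2}{3}|E(G)|$ bad circuits) shows that the barbell-joining idea, as stated, cannot work without a genuinely new ingredient. There is also a concrete error: the claim that the symmetric difference of two bad circuits from different $F_k$ ``is a balanced even subgraph, which may be decomposed into balanced circuits'' conflates \emph{positive} (even number of negative edges in total) with \emph{balanced} (every circuit balanced). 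The symmetric difference is positive, but its circuit decomposition may well contain pairs of unbalanced circuits, so this move does not eliminate bad components in the way you need.

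The paper's argument is structurally different and avoids this trap entirely. It never tries to repair bad circuits one pair at a time. Instead, it covers only \emph{two} $2$-factors, each with length at most $\tfrac{10}{9}|E(G)|$, via the following key lemma: if a $2$-factor $F$ has an even number of unbalanced components, contract the circuits of $F$, take a minimum $T$-join $J$ (with $T$ the images of the unbalanced ones) of size at most $\tfrac{1}{6}|E(G)|$, and apply the known $\tfrac{4}{3}$-bound for signed graphs whose bridgeless-blocks are Eulerian to $F\cup J$; this gives $\tfrac{4}{3}(\tfrac{2}{3}+\tfrac{1}{6})|E(G)|=\tfrac{10}{9}|E(G)|$. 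A short parity/color-swap argument guarantees two $2$-factors with this parity property, and the remaining degenerate subcases (all $RB$-circuits balanced; a single unbalanced circuit in $RY$) are handled separately with an ad hoc barbell or a structural lemma about $G-E(C)$ being balanced. The missing idea in your outline is precisely this $T$-join step replacing per-pair path joining: it bounds the total connecting cost globally by $\tfrac{1}{6}|E(G)|$, which is what makes $\tfrac{20}{9}$ fall out.
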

An equivalent version of the Four-Color Theorem states that every $2$-edge-connected cubic planar graph is $3$-edge colorable.
So we have the following corollary.
\begin{corollary}
	Every flow-admissible  $2$-edge-connected cubic planar signed graph $(G,\sigma)$
	has a sign-circuit cover with length at most $\frac{20}{9} |E(G)|$.
\end{corollary}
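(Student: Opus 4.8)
The starting point is that a connected $3$-edge colorable cubic graph is automatically $2$-edge-connected: a cubic graph with a bridge has no perfect matching (each side of the bridge spans an odd number of vertices), hence no proper $3$-edge coloring. Treating the components of $G$ separately, we may thus assume $G$ is connected, fix three perfect matchings $M_1,M_2,M_3$ partitioning $E(G)$, and for $\{i,j,k\}=\{1,2,3\}$ put $F_i:=M_j\cup M_k$. Each $F_i$ is a $2$-factor of $G$ all of whose circuits have even length (they alternate between $M_j$ and $M_k$), and since $|M_1|=|M_2|=|M_3|=\frac13|E(G)|$, listing the circuits of any two of the $F_i$ gives a family of circuits covering $E(G)$ of total length exactly $\frac43|E(G)|$, the matching common to the two $2$-factors being covered twice (while listing all three $F_i$ covers every edge exactly twice, with total length $2|E(G)|$).

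\emph{Reduction to unbalanced circuits.} In each $F_i$ the balanced circuits are already sign-circuits, so the whole problem concerns the family $\mathcal U_i$ of unbalanced circuits of $F_i$. Since an unbalanced circuit carries an odd number of negative edges, $|\mathcal U_i|$ has the same parity as the number of negative edges of $F_i$; as every negative edge of $G$ lies in exactly two of $F_1,F_2,F_3$, summing over $i$ shows $|\mathcal U_1|+|\mathcal U_2|+|\mathcal U_3|$ is even, so either none or exactly two of these numbers are odd, and we may pick two $2$-factors $F_a,F_b$ with $|\mathcal U_a|+|\mathcal U_b|$ even. The plan is to take $F_a\cup F_b$ as the backbone of the cover (length $\frac43|E(G)|$), keep all balanced circuits of $F_a,F_b$, and replace the unbalanced ones by barbells: pair up the circuits of $\mathcal U_a\cup\mathcal U_b$, and for a pair $C,C'$ of vertex-disjoint circuits --- which is automatic when $C,C'$ lie in the same $2$-factor, and can be arranged for the at most one cross-pair otherwise --- join them by a shortest path $P$ from $C$ to $C'$, whose internal vertices lie outside $C\cup C'$ by minimality, so that $C\cup P\cup C'$ is a barbell (a pair sharing a single vertex gives a short barbell at once, and the exceptional cases where two paired circuits share edges are disposed of by replacing them with a balanced circuit plus one auxiliary barbell). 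Each replacement adds only $|E(P)|$ to the length, so the resulting sign-circuit cover has length at most $\frac43|E(G)|+R$, where $R$ is the total length of the joining paths.

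\emph{The crux: bounding $R$.} It remains to choose the pairing so that $R\le\frac89|E(G)|$, since $\frac43+\frac89=\frac{20}9$. The number of joining paths is $\frac12(|\mathcal U_a|+|\mathcal U_b|)$, at most half the total number of circuits of $F_a$ and $F_b$, so what is needed is that the joining paths can be made short \emph{on average}. The structural input I would use is that the circuits of $\mathcal U_a$ (and of $\mathcal U_b$) are vertex-disjoint even circuits, each of whose vertices carries a ``third-colour'' edge --- an edge of $M_a$, respectively $M_b$ --- leaving the circuit; following these edges shows that the unbalanced circuits of a $2$-factor have a bounded-degree adjacency structure, so nearby unbalanced circuits (when they exist) can be paired through short paths, and a global charging argument, in which each joining path is charged against the edges of $M_a\cup M_b$ it meets and against the unbalanced circuits it repairs, should deliver the bound. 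The available slack is that edges already covered twice --- in particular the doubled matching of the backbone --- may be reused freely inside the joining paths, that a long unbalanced circuit absorbs its own repair cost (its length is already paid for in the backbone), and that, when the parities permit more than one admissible backbone, one may average over the admissible choices. Establishing this distance-and-charging estimate, together with the bookkeeping for the exceptional surgery cases, is where the real work lies; everything else is routine. Should a direct estimate on $R$ prove too lossy, Theorem~\ref{THM: grneral-upper} can be invoked to cover the balanced part of $G$ more economically and redistribute the savings.
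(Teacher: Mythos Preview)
The paper proves this corollary in a single sentence: the Four-Color Theorem (in Tait's form) says every $2$-edge-connected cubic planar graph is $3$-edge colorable, so Theorem~\ref{THM: main} applies directly. Your proposal never invokes planarity at all; what you are really attempting is an independent proof of Theorem~\ref{THM: main} itself.

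Viewed that way, your argument has a genuine gap at the step you yourself flag as ``the crux'': the inequality $R\le\frac{8}{9}|E(G)|$ on the total length of the joining paths is never established. Saying that a ``global charging argument \dots\ should deliver the bound'' is not a proof, and there is no reason to expect it to go through without further ideas. Unbalanced circuits in $F_a$ (or $F_b$) can be far apart in $G$, and nothing in your outline controls how many distinct matching edges a shortest joining path must traverse; the slack you cite (reusing doubly covered edges, long circuits absorbing their own cost, averaging over backbones) is not turned into an actual estimate. Your handling of the cross-pair and of paired circuits that share edges is also only gestured at.

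For comparison, the paper's proof of Theorem~\ref{THM: main} does \emph{not} try to bound joining-path lengths directly. Instead it covers a $2$-factor with an even number of negative edges via a $T$-join argument (Lemma~\ref{10-9subcase1.1}, giving $\frac{10}{9}|E(G)|$ per $2$-factor), and it isolates the delicate situation of a single unbalanced circuit $C$ with $G-E(C)$ balanced in a separate structural lemma (Lemma~\ref{LEM: tech1}). A case analysis on the parities and multiplicities of unbalanced circuits in the three $2$-factors then combines these pieces. If you want to salvage your barbell-pairing approach you would need a concrete bound on $R$; absent that, the proposal is incomplete.
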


Now we introduce more notation and terminologies used in the following sections.
Let $G$ be a graph and  $T\subseteq V(G)$ with $|T|\equiv0\pmod2$. A {\it $T$-join} $J$ of $G$ with respect to $T$ is a subset of edges of $G$ such that $d_{J}(v)\equiv 1 \pmod 2$ if and only if $v \in T$, where $d_J(v)$ denotes the degree of $v$ in the edge-induced subgraph $G[J]$. A $T$-join is minimum if it has minimum number of edges among all $T$-joins.
Let  $G^{\prime}$ be the graph obtained from a graph $G$ by  deleting  all  the  bridges  of $G$. Then the components of
$G^{\prime}$ are called the {\it bridgeless-blocks} of $G$. By the definition, a bridgeless-block is either a single vertex or a maximal $2$-edge-connected subgraph of $G$.
For a vertex subset $U \subseteq V (G)$, $\delta_{G}(U)$ denotes
the set of edges with one end in $U$ and the other in $V(G)\setminus U$.
Let $u$, $v$ be two  vertices in $V(G)$.  A $(u,v)$-path is a path connecting $u$ and $v$.
Let $C=v_{1}\ldots v_{r}v_{1}$ be a circuit where $v_{1}, v_{2},\ldots, v_{r}$ appear in clockwise on $C$.
A {\it segment} of $C$ is the path $v_{i}v_{i+1}\ldots v_{j-1}v_{j}$ (where the sum of the index is under modulo $r$) contained in $C$ and is denoted by $v_{i}Cv_{j}$.
A connected graph $H$ is called a {\it cycle-tree}~\cite{2018Circuit} if it has no vertices of degree 1 and all circuits of $H$ are edge-disjoint.
In a signed graph, {\it switching} a vertex $u$ means reversing the signs of all edges incident with
$u$. Two signed graphs are {\it equivalent} if one can be obtained from the other by a sequence of switching operations, and a signed graph is {\it balanced} if and only if it is equivalent to an ordinary graph. The set of negative edges of $(G, \sigma)$ is denoted by $E_N(G, \sigma)$.

The rest of the article is organized as follows.
Some basic lemmas about signed graph and $T$-join and a crucial lemma which deals with a special case in our proof are given in Section 2.
 Then we are able to complete the proof of Theorem \ref{THM: main} in Section 3 and we will finish with some discussions and remark.

\section{Some Lemmas}
The following lemma due to Bouchet~\cite{1983Nowhere}
characterized connected flow-admissible signed graphs.
\begin{lemma}[Bouchet \cite{1983Nowhere}]\label{LEM: flowadmissible}
 A connected signed graph $(G, \sigma)$ is flow-admissible if and only if it is not equivalent to a signed graph with exactly one negative edge and it has no bridge
$e$ such that $(G-e, \sigma |_{G-e})$ has a balanced component.
\end{lemma}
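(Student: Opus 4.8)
The plan is to prove both directions through the combinatorial reformulation already recorded above: a connected signed graph is flow-admissible if and only if every edge lies in a sign-circuit. Granting this, it suffices to show that every edge lies in a sign-circuit exactly when $(G,\sigma)$ is neither equivalent to a one-negative-edge signing nor has a bridge whose deletion leaves a balanced component. For \emph{necessity} I would argue the contrapositive by exhibiting an uncoverable edge in each forbidden case. Since switching preserves both flow-admissibility and the sign-circuit structure, if $(G,\sigma)$ is equivalent to a signing with a unique negative edge $e$ I may assume this signing; then every circuit through $e$ contains exactly one negative edge and is unbalanced, so $e$ lies in no balanced circuit, while a barbell would need two edge-disjoint unbalanced circuits and hence at least two negative edges, so $e$ lies in no barbell either. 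If $e$ is a bridge with $G-e$ having a balanced component $B$, then $e$ is in no circuit and hence in no balanced circuit, and in any barbell $e$ would lie on the connecting path, forcing one of the two unbalanced circuits inside $B$ — impossible. Thus in both cases $e$ lies in no sign-circuit.

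For \emph{sufficiency} I would pass to an equivalent signing minimizing the number $k=|E_N(G,\sigma)|$ of negative edges and fix an arbitrary edge $e$. If $k=0$ the graph is balanced, and the no-bad-bridge hypothesis forces $G$ bridgeless (a bridge of a balanced graph splits it into balanced components), so $e$ lies on a circuit, which is a balanced sign-circuit; the case $k=1$ is excluded, so assume $k\ge 2$. If $e$ is a bridge, the hypothesis makes both components of $G-e$ unbalanced, and taking an unbalanced circuit in each and joining them through $e$ yields a barbell containing $e$. If $e$ is a non-bridge lying on some balanced circuit, that circuit is the desired sign-circuit.

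The remaining case — a non-bridge $e$ every circuit through which is unbalanced, with $k\ge 2$ — is where the main difficulty lies, and I would resolve it through the $\mathbb{Z}_2$-linear balance functional $\beta$ on the cycle space, where $\beta(C)\equiv |E(C)\cap E_N|\pmod 2$ detects whether a circuit is unbalanced and depends only on $E_N$ modulo the cut space; ``equivalent to one negative edge $f$'' is precisely the statement that $\beta$ equals the functional ``contains $f$''. Since every circuit through $e$ is unbalanced, $\beta(C)$ equals $1$ when $e\in C$; were it also $0$ for every $C$ with $e\notin C$, then $\beta$ would be the functional ``contains $e$'', giving $k=1$, a contradiction. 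Hence there is an unbalanced circuit $C_0$ avoiding $e$. Choosing any circuit $C_1\ni e$ and decomposing the even subgraph $C_0\triangle C_1$ into edge-disjoint circuits $D_1,\dots,D_r$, the edge $e$ lies in exactly one of them, say $D_1$, which is a circuit through $e$ and therefore unbalanced; since $\beta(C_0\triangle C_1)=\beta(C_0)+\beta(C_1)\equiv 0\pmod 2$, an even number of the $D_i$ are unbalanced, so some $D_j$ with $j\ne 1$ is unbalanced and edge-disjoint from $D_1$, and joining $D_1$ and $D_j$ by a shortest connecting path produces a barbell containing $e$. The point I expect to require the most care is assembling this final barbell cleanly — using connectivity and a minimal choice of path so that it meets $D_1$ and $D_j$ only at its ends (so the degenerate short-barbell case is a single shared vertex) — together with the routine verifications that switching preserves all relevant structure and that every element of the cycle space is an edge-disjoint union of circuits. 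With these in hand the cases above exhaust every edge and close both directions.
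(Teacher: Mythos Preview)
The paper does not prove this lemma at all: it is stated with attribution to Bouchet~\cite{1983Nowhere} and used as a black box, so there is no argument in the paper to compare your attempt against.

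As an independent matter, your outline is a sound reconstruction. The necessity direction is clean, and in the sufficiency direction your cycle-space argument correctly shows that, in the hard case, there must exist an unbalanced circuit $C_0$ avoiding $e$ (else $(G,\sigma)$ would be switching-equivalent to the signing $\{e\}$), and the decomposition of $C_0\triangle C_1$ then yields edge-disjoint unbalanced circuits $D_1\ni e$ and $D_j$. The only step that is not yet a proof is precisely the one you flag: edge-disjointness of $D_1$ and $D_j$ does not prevent them from sharing two or more vertices, and in that situation $D_1\cup D_j$ is not a barbell as defined. You must still extract a sign-circuit through $e$ from this union. One clean way is to observe that every $x$--$y$ path in $G-e$ (where $e=xy$) has the same parity, since each such path closes up with $e$ to an unbalanced circuit; using this, a short case analysis on how $x$ and $y$ attach to $C_0$ forces the existence of an unbalanced circuit through $e$ meeting $C_0$ in at most one vertex, which gives the barbell directly. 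However you finish it, expect to spend a full paragraph here rather than a sentence.
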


\begin{lemma}[Li, Li, Luo, Zhang and Zhang \cite{2021Cubic}]\label{LEM: spanningtree}
	Let $T $ be a spanning tree of a signed graph $G$. For every $e \notin E(T)$, let $C_e$ be the unique circuit contained in $T + e$.
	 If the circuit $C_e$  is balanced for every $e\notin E(T)$, then $G$ is balanced.
   \end{lemma}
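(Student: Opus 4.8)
The plan is to prove the statement by exhibiting an explicit switching that turns $(G,\sigma)$ into an all-positive graph; since the paper defines a signed graph to be balanced exactly when it is equivalent (via switching) to an ordinary graph, this will finish the proof. Note that $G$ is connected, as it carries a spanning tree $T$. I would root $T$ at an arbitrary vertex $r$, let $P_v$ denote the unique $r$--$v$ path in $T$ for each $v\in V(G)$, and define a switching function $f\colon V(G)\to\{+1,-1\}$ by $f(v)=\prod_{e\in P_v}\sigma(e)$ (so $f(r)=+1$). Switching at the vertex set $\{v : f(v)=-1\}$ then replaces the sign of every edge $xy$ by $\sigma'(xy)=f(x)\,\sigma(xy)\,f(y)$, because the sign of an edge flips precisely when exactly one of its endpoints is switched. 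The goal is to show $\sigma'\equiv +1$.

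For the tree edges this is immediate: if $uv\in E(T)$ with $u$ the parent of $v$, then $P_v=P_u\cup\{uv\}$, so $f(v)=f(u)\sigma(uv)$ and hence $\sigma'(uv)=f(u)f(v)\sigma(uv)=f(u)^2\sigma(uv)^2=+1$. The real content is in the non-tree edges, and this is where the hypothesis on the fundamental circuits enters. For $e=uv\notin E(T)$ the fundamental circuit $C_e$ consists of $e$ together with the $u$--$v$ path in $T$. The key identity to establish is that the product of signs along this tree path equals $f(u)f(v)$: writing the path as the portions of $P_u$ and of $P_v$ lying below their least common ancestor, the two initial segments from $r$ to that common ancestor appear in both $P_u$ and $P_v$ and cancel, each contributing $\sigma^2=1$. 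Granting this, balance of $C_e$ reads $\sigma(e)\cdot f(u)f(v)=+1$, whence $\sigma'(e)=f(u)f(v)\sigma(e)=+1$ as well.

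Combining the two cases, every edge is positive after switching, so $(G,\sigma)$ is equivalent to an ordinary graph and is therefore balanced. The only step needing genuine care is the cancellation identity linking the tree-path sign-product to $f(u)f(v)$, together with the bookkeeping formula $\sigma'(xy)=f(x)\sigma(xy)f(y)$ for the effect of the switching; once these are in place the argument is purely mechanical. I expect this to be the cleanest route because it stays entirely within the switching formulation used in the paper. An alternative is to observe that the fundamental circuits $\{C_e\}$ form a $\mathbb{Z}_2$-basis of the cycle space and that the parity of the number of negative edges is a linear functional on $\mathbb{Z}_2^{E(G)}$, so that every circuit---being a $\mathbb{Z}_2$-sum of fundamental circuits---inherits balance; however, concluding balance of the whole graph from balance of all circuits invokes Harary's characterization, which is not recorded in the excerpt, so I would prefer the direct construction above.
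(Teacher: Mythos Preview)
Your argument is correct: defining a switching function by the sign-product along root paths in $T$ is the standard way to prove this lemma, and your verification for both tree edges and non-tree edges (via the least-common-ancestor cancellation) is sound.

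Note, however, that the paper does not supply its own proof of this statement; it merely quotes the lemma from the cited reference \cite{2021Cubic}. So there is no ``paper's proof'' to compare against here. Your direct switching construction is exactly the natural proof and would serve perfectly well if one wanted to make the paper self-contained at this point.
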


Wu and Ye~\cite{2018Minimum} gave a lemma to control the size of a $T$-join.
\begin{lemma}[Wu and Ye \cite{2018Minimum}]\label{LEM: T-join}
Let $G$ be a 2-edge-connected graph and $T$ be subset of vertices with $|T|$ even. Then $G$ has a
	$T$-join of size at most $\frac{1}{2}|E(G)|$.
\end{lemma}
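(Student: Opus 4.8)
The plan is to exploit the fact that the collection of $T$-joins of $G$ forms a coset of the cycle space, and then to average over that coset. Fix any $T$-join $J_0$ of $G$; one exists because $G$ is connected and $|T|$ is even. Write $\mathcal{C}$ for the cycle space of $G$ over $\mathbb{F}_2$, i.e. the family of even subgraphs (edge sets in which every vertex has even degree), regarded as an $\mathbb{F}_2$-vector space under symmetric difference $\triangle$. A routine parity computation at each vertex shows that a set $J\subseteq E(G)$ is a $T$-join if and only if $J=J_0\,\triangle\,C$ for some $C\in\mathcal{C}$: adding an even subgraph changes no vertex degree modulo $2$, and the symmetric difference of two $T$-joins is an even subgraph. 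Thus the $T$-joins are exactly the $|\mathcal{C}|$ sets $\{J_0\triangle C:C\in\mathcal{C}\}$, and it suffices to show that the average of $|J_0\triangle C|$ over $C\in\mathcal{C}$ is $\tfrac12|E(G)|$; some $T$-join will then have size at most $\tfrac12|E(G)|$, and a minimum $T$-join does too.

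The key step, and the only place where $2$-edge-connectivity enters, is to count for each fixed edge $e$ how many members of $\mathcal{C}$ contain $e$. The map $C\mapsto[e\in C]$ is an $\mathbb{F}_2$-linear functional on $\mathcal{C}$. Since $G$ is $2$-edge-connected it is bridgeless, so $e$ lies on some circuit of $G$, that is, on some member of $\mathcal{C}$; hence this functional is not identically zero. A nonzero linear functional on an $\mathbb{F}_2$-vector space vanishes on a hyperplane of index $2$, so exactly half of the elements of $\mathcal{C}$ contain $e$ and exactly half avoid it. (Equivalently, fixing one circuit $C_e\ni e$, the involution $C\mapsto C\triangle C_e$ is a bijection between the members of $\mathcal{C}$ avoiding $e$ and those containing $e$.)

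Now fix $e$ and ask how often $e$ lies in $J_0\triangle C$. If $e\in J_0$ then $e\in J_0\triangle C$ exactly when $e\notin C$, and if $e\notin J_0$ then $e\in J_0\triangle C$ exactly when $e\in C$; by the previous paragraph each condition holds for exactly half of the $C\in\mathcal{C}$. Summing over all edges and exchanging the order of summation gives
\[
\sum_{C\in\mathcal{C}}|J_0\triangle C|
=\sum_{e\in E(G)}\bigl|\{C\in\mathcal{C}:e\in J_0\triangle C\}\bigr|
=\sum_{e\in E(G)}\tfrac12|\mathcal{C}|
=\tfrac12|E(G)|\cdot|\mathcal{C}|.
\]
Hence the average size of a $T$-join is exactly $\tfrac12|E(G)|$, so at least one $T$-join satisfies $|J|\le\tfrac12|E(G)|$, which proves the lemma.

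I expect the substantive point — and the step most in need of care — to be precisely the ``exactly one half'' count: it relies on every edge lying on a circuit, which fails for a bridge. For a bridge $e$ the functional $[e\in C]$ is identically zero, every $T$-join contains $e$ (or avoids it) according to a fixed parity, and the clean averaging collapses; this is exactly why the hypothesis of $2$-edge-connectivity cannot be dropped, and verifying that it delivers the bijection above is the heart of the argument.
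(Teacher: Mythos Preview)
Your averaging argument over the cycle space is correct: the $T$-joins form a coset of the cycle space, every edge of a bridgeless graph lies in some circuit so the functional $C\mapsto[e\in C]$ is nonzero for each $e$, and hence each edge appears in exactly half of the $T$-joins, giving average size $\tfrac12|E(G)|$.

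There is, however, nothing in the present paper to compare against: Lemma~\ref{LEM: T-join} is quoted from Wu and Ye~\cite{2018Minimum} and used here as a black box, with no proof supplied. So your write-up stands on its own as a complete and standard proof of the cited result. The one small remark worth making is that your argument uses only bridgelessness, not the full strength of $2$-edge-connectivity; for graphs without loops these coincide, but since the paper allows loops you might note explicitly that a loop is itself a circuit and hence the ``half the cycle space contains $e$'' step goes through for loops as well.
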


The following two results gave upper bounds of $scc(G)$ with $G$ under some constrains.
\begin{lemma}[Chen, Fan \cite{2021Circuit} and Kaiser, Luko\v{t}ka, M\'a$\check{\text{c}}$ajov\'a, Rollov\'a~\cite{Kaiser19}]\label{LEM: scc1}
 Let $(G,\sigma)$ be a signed graph and suppose that each bridgeless-block of $G$ is Eulerian.

(a)(Corollary 1.5 in \cite{2021Circuit}) If $(G,\sigma)$ is flow-admissible, then $scc(G) \leq \frac{3}{2}|E(G)|$.

(b)(Corollary 2.6 in \cite{Kaiser19}) If the union of all the bridgeless-block of $G$, denoted by $H$, is positive, then there exists a family of sign-circuits ${\mathcal{F}}$ covers $H$ with length at most $\frac{4}{3}|E(G)|$.
\end{lemma}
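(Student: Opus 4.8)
The plan is to build the cover from a circuit decomposition of the Eulerian bridgeless-blocks and then repair the unbalanced circuits into barbells, paying for the repair with a $T$-join so that the total length stays at $|E(G)|+\tfrac12|E(G)|$. We may assume $G$ is connected. Since each bridgeless-block $B$ is Eulerian it decomposes into edge-disjoint circuits; the balanced ones are already sign-circuits, while the unbalanced ones must be combined in pairs into barbells. A useful bookkeeping fact is that, because the circuits are edge-disjoint, the number of unbalanced circuits in any such decomposition of $B$ is congruent mod $2$ to the number of negative edges of $B$; in particular this number is even exactly when $B$ is positive.

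For a fixed block $B$, I would place one marked vertex on each unbalanced circuit and let $T_B$ be this vertex set. A $T_B$-join decomposes into paths pairing the marked vertices, and each such path fuses a pair of unbalanced circuits into a barbell. By Lemma \ref{LEM: T-join}, since $B$ is $2$-edge-connected there is a $T_B$-join $J_B$ with $|J_B|\le\tfrac12|E(B)|$. Using the paths of $J_B$ as barbell-connectors, the circuit decomposition together with these barbells is a sign-circuit cover of $B$: its length is the total number of circuit edges, $|E(B)|$, plus the connecting-path edges, $|J_B|$, i.e.\ at most $|E(B)|+\tfrac12|E(B)|=\tfrac32|E(B)|$.

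The remaining work for (a) is to cover the bridges and to pair the one leftover unbalanced circuit that survives in each \emph{negative} block (where the count of unbalanced circuits is odd). This is exactly where flow-admissibility enters: by Lemma \ref{LEM: flowadmissible}, deleting any bridge leaves no balanced component, so both sides of every bridge contain an unbalanced circuit. Hence the leftover unbalanced circuits can be paired along paths of the block--bridge tree, and every bridge is swept by at least one such barbell path and is thereby covered. I expect the \textbf{main obstacle} of part (a) to be precisely this inter-block step: carrying out the pairing of the odd leftovers while keeping the total connecting-path length (now including bridge edges) inside the $\tfrac12|E(G)|$ budget. I would handle it by induction on the number of bridges / on the block tree, charging each bridge-crossing path to the Wu--Ye budget of the blocks it meets and invoking Lemma \ref{LEM: flowadmissible} to guarantee that a valid unbalanced partner always exists on the far side. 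Summing the block bounds and the routing paths then yields $scc(G)\le\tfrac32|E(G)|$.

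For (b) the positivity of $H$ allows a strictly better construction that avoids paying the $T$-join price at all. Because $H$ is positive and even, I would not pair unbalanced circuits one at a time but instead exhibit \emph{three} families of balanced circuits covering every edge of $H$ exactly twice — a balanced-circuit double cover split into three parity classes, the natural signed analogue of the three even subgraphs coming from a nowhere-zero $4$-flow. Since each edge lies in exactly two of the three families, any two of them already cover all of $H$; as the three families have total length $2|E(H)|$, the two of smallest total length have length at most $\tfrac23\bigl(2|E(H)|\bigr)=\tfrac43|E(H)|\le\tfrac43|E(G)|$. The genuinely delicate point of (b) is establishing these three balanced-circuit families: this is exactly where positivity of $H$ is used (it forces the parity classes to consist of \emph{balanced} circuits rather than barbells), and it is the content of Corollary 2.6 of \cite{Kaiser19}.
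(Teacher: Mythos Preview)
This lemma is not proved in the paper at all: it is quoted verbatim as Corollary~1.5 of \cite{2021Circuit} and Corollary~2.6 of \cite{Kaiser19}, with no argument given. So there is no ``paper's own proof'' to compare your proposal against.

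Judged on its own, your sketch has real gaps in both parts. For (a), the within-block step is essentially right in spirit, but the cross-block step --- covering bridges and pairing the leftover unbalanced circuits from negative blocks --- is precisely where all the work lies, and you do not actually do it: ``I would handle it by induction \dots\ charging each bridge-crossing path to the Wu--Ye budget'' is not an argument. You also need to check that the $T$-join paths really yield valid barbells (the path must meet each unbalanced circuit only at its endpoint), and that the barbell bookkeeping does not double-count when paths run through edges of the circuit decomposition.

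For (b), your proposal is circular. You reduce the problem to exhibiting three balanced even subgraphs giving a double cover of $H$, and then say this ``is the content of Corollary~2.6 of \cite{Kaiser19}'' --- but Corollary~2.6 \emph{is} part (b). The nontrivial input you need is that a positive signed Eulerian graph admits such a three-way balanced decomposition (the signed analogue of the $4$-flow/three-even-subgraphs theorem), and that has to be proved, not invoked.
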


A combination of the above two lemmas leads to the following.
\begin{lemma}\label{10-9subcase1.1}
  Let $F$ be a $2$-factor of a $2$-edge-connected cubic sign graph $(G,\sigma)$. If $F$ contains even number of negative edges, then there exists a family of sign-circuits ${\mathcal{F}}$ covers $F$ with length at most $\frac{10}{9}|E(G)|$.
\end{lemma}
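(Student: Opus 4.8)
The plan is to enlarge $F$ by a carefully chosen small set $J$ of the complementary (matching) edges so that the signed graph $H=F\cup J$ has its bridgeless-blocks equal to the circuits of $F$, is flow-admissible, and has a positive bridgeless-block-union; then Lemma~\ref{LEM: scc1}(b) applied to $H$ yields a family of sign-circuits covering $F$ of length at most $\tfrac43|E(H)|$, and the whole reason to keep $J$ small is to make $\tfrac43|E(H)|\le\tfrac{10}{9}|E(G)|$.

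Write $n=|V(G)|$. Since $G$ is cubic and $2$-edge-connected, $|E(G)|=\tfrac32 n$, the set $M:=E(G)\setminus E(F)$ is a perfect matching with $|M|=\tfrac12 n$, and $F=C_1\cup\dots\cup C_t$ is a vertex-disjoint union of circuits with $|E(F)|=n=\tfrac23|E(G)|$. Form the multigraph $G^{*}$ from $G$ by contracting each $C_i$ to a vertex $v_i$ and deleting the resulting loops; then $E(G^{*})\subseteq M$, so $|E(G^{*})|\le\tfrac12 n$, and $G^{*}$ is $2$-edge-connected, since contracting vertex-disjoint connected subgraphs of a $2$-edge-connected graph keeps it $2$-edge-connected (a bridge of $G^{*}$ would lift to a bridge of $G$, and deleting loops changes nothing). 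Let $T=\{v_i:C_i\text{ is unbalanced}\}$; because $F$ has an even number of negative edges, $|T|$ is even. By Lemma~\ref{LEM: T-join}, $G^{*}$ has a $T$-join of size at most $\tfrac12|E(G^{*})|\le\tfrac14 n$; choose one of minimum size, $J^{*}$. A minimum $T$-join is a forest (any cycle in it could be deleted), so every leaf of $J^{*}$ has degree $1$ and hence lies in $T$. Let $J\subseteq M$ be the set of edges of $G$ corresponding to $J^{*}$, and put $H=F\cup J$ on vertex set $V(G)$; then $|E(H)|=n+|J^{*}|\le\tfrac54 n=\tfrac56|E(G)|$.

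Next I would check the three structural properties of $H$. (i) Since $J^{*}$ is a forest in $G^{*}$, every edge of $J$ is a bridge of $H$, while no edge of a circuit $C_i$ can be a bridge of $H$; hence deleting all bridges of $H$ leaves exactly $F$, so the bridgeless-blocks of $H$ are $C_1,\dots,C_t$, each Eulerian. (ii) Their union is $F$, which is positive by hypothesis. (iii) $H$ is flow-admissible: a component $K$ of $H$ is a tree of circuits glued at the bridges in $J$; if $K$ is a single circuit, its vertex in $G^{*}$ is isolated in $J^{*}$ and hence not in $T$, so $K$ is a balanced circuit and thus flow-admissible, and otherwise every leaf of the underlying tree is an unbalanced circuit, which readily forces both components of $K-e$ to contain an unbalanced circuit for every bridge $e$ of $K$ and forces $K$ to contain at least two edge-disjoint unbalanced circuits, so $K$ is flow-admissible by Lemma~\ref{LEM: flowadmissible}.

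Finally, applying Lemma~\ref{LEM: scc1}(b) to $H$ produces a family $\mathcal F$ of sign-circuits covering the union of the bridgeless-blocks of $H$, that is, covering $F$, with $\ell(\mathcal F)\le\tfrac43|E(H)|\le\tfrac43\cdot\tfrac56|E(G)|=\tfrac{10}{9}|E(G)|$, which is the assertion. I expect the main obstacle to be balancing the two competing demands on $J$: it must join the unbalanced circuits well enough that $H$ is flow-admissible and its bridgeless-blocks form a positive union of circuits (this is exactly what unlocks the $\tfrac43$-bound of part~(b) rather than only the $\tfrac32$-bound of part~(a)), yet small enough that $\tfrac43|E(H)|$ does not exceed $\tfrac{10}{9}|E(G)|$; the $T$-join lemma is the tool that resolves this tension, and the identity $\tfrac43\cdot(\tfrac23+\tfrac16)=\tfrac{10}{9}$ shows that the estimate is used tightly.
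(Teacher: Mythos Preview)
Your proof is correct and follows the same approach as the paper: contract the circuits of $F$ to form $G^{*}$, take a minimum $T$-join $J$ in $G^{*}$ (with $T$ marking the unbalanced circuits), and apply Lemma~\ref{LEM: scc1}(b) to $F\cup J$ using the bound $|J|\le\tfrac12|E(G^{*})|=\tfrac16|E(G)|$. Your write-up is in fact a bit more careful than the paper's---you make explicit that a minimum $T$-join is acyclic, so the bridgeless-blocks of $F\cup J$ are exactly the circuits $C_i$---while your flow-admissibility check~(iii) is unnecessary, since part~(b) of Lemma~\ref{LEM: scc1} requires only that the union of bridgeless-blocks be positive.
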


\begin{proof} We may assume that the $2$-factor $F$ consists of circuits $C_1, C_2,\ldots, C_{t}$.
  Denote by $G^{*}$ the graph obtained from $G$ by contracting each circuit $C_{i}$ of $F$ to a single vertex $c_{i}$.

  Since $F$ contains even number of negative edges, the number of unbalanced circuits in $F$ is even.
Without loss of generality, we may assume that $\mathcal{C}=\{C_{1},C_{2},...,C_{2s}\}$ is the set of unbalanced circuits of $F$. Let $T=\{c_{1},c_{2},...,c_{2s}\}$ and $J$ be a minimum $T$-join of $G^{*}$ with respect to $T$.
		Since $G$ is 2-edge-connected and $G^{*}$ is obtained from $G$ by contracting edges,  $G^{*}$ is 2-edge-connected as well. By Lemma~\ref{LEM: T-join}, we have $|J| \leq \frac{1}{2}|E(G^{*})|=\frac{1}{6}|E(G)|$. Consider the edge set $F \cup J$ in $G$, and we view it as an edge-induced subgraph of $G$. By the definition of $T$-join, we can apply Lemma~\ref{LEM: scc1}(b) to $F \cup J$, i.e., there exists a family of sign-circuits ${\mathcal{F}}$ covers $F$  with length
		\begin{align*}
			\ell({\mathcal{F}})&\leq \frac{4}{3}|E(F\cup J)|\\
			                    &= \frac{4}{3}\left(|E(F)|+|E(J)|\right)\\
								&\leq \frac{4}{3}\left(\frac{2}{3}|E(G)|+\frac{1}{6}|E(G)|\right)\\
                                & =\frac{10}{9}|E(G)|.
		\end{align*}	
This proves the lemma.
\end{proof}

The proof of the following lemma is inspired by the proof of Lemma 3.7 in \cite{2021Cubic} on flows of $3$-edge colorable cubic signed graphs.
\begin{lemma}\label{LEM: tech1}
	Let $C$ be an unbalanced circuit of a cubic signed graph $(G,\sigma)$. If $(G,\sigma)$ is flow-admissible and $G-E(C)$ is balanced, then $(G,\sigma)$ has a family $\mathcal{F}$ of sign-circuits such that

(1) $E(C)$ is covered by $\mathcal{F}$, and

(2) the length of $\mathcal{F}$ satisfies $\ell(\mathcal{F}) \leq \frac{8}{9}|E(G)|+|E(C)|$.
\end{lemma}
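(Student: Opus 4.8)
The plan is to normalize the instance by switching, peel off bridges, cover the bulk of $C$ by a family of balanced circuits built from a $T$-join, and then clean up a bounded number of leftover negative edges with one extra barbell; all lengths are controlled through Lemma~\ref{LEM: T-join}.

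First I would switch signs so that every edge of $H:=G-E(C)$ is positive; this affects neither lengths nor the family of sign-circuits. Since $C$ is unbalanced it carries an odd number of negative edges, and by Lemma~\ref{LEM: flowadmissible} flow-admissibility rules out exactly one, so $C$ has negative edges $e_1,\dots,e_{2k+1}$ ($k\ge1$) appearing in this cyclic order and splitting $C$ into positive arcs. (A preliminary, routine step merges each maximal run of consecutive negative edges, so that one may assume the arcs between the cut-points chosen below are nontrivial.) Because $G$ need not be $2$-edge-connected, I would also first discard the bridges of $G$: circuits and barbells live inside bridgeless-blocks, and by Lemma~\ref{LEM: flowadmissible} no block hanging off a bridge is balanced once one throws away balanced pendant pieces, so after this reduction the graph obtained from $G$ by contracting $C$ to a single vertex may be assumed $2$-edge-connected. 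Arranging this cleanly, so that Lemma~\ref{LEM: T-join} becomes applicable, is one of the two main obstacles.

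Next I would pick a set $F$ of at most three negative edges of $C$ so that every arc of $C\setminus F$ carries an \emph{even} number of negative edges (for $2k+1\ge3$ this is achieved, for instance, by taking three consecutive negative edges, which leaves two positive arcs and a third arc with $2k-2$ negatives). Let $G^{\ast}$ be $G$ with $C$ contracted, so $|E(G^{\ast})|=|E(H)|=|E(G)|-|E(C)|$, and let $J$ be a minimum $T$-join of $G^{\ast}$, where $T$ is the set of end-vertices of the arcs of $C\setminus F$ (together with the few further ends needed for the barbell below). By Lemma~\ref{LEM: T-join}, $|J|\le\tfrac12|E(G^{\ast})|\le\tfrac12|E(G)|$. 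The edge set $(C\setminus F)\cup J$ has all even degrees, hence decomposes into circuits, and since every arc of $C\setminus F$ carries an even number of negative edges while $J$ is positive, \emph{every} one of these circuits is balanced — no matter how $J$ happens to pair up the arc-ends. This yields a family of balanced circuits covering $C\setminus F$ and $J$, of total length $|E(C\setminus F)|+|J|\le\bigl(|E(C)|-|F|\bigr)+\tfrac12|E(G)|$.

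It remains to cover the at most three negative edges of $F$, by adding one more sign-circuit: a barbell $B=U_1\cup P\cup U_2$, where $U_1,U_2$ are two vertex-disjoint unbalanced circuits passing through two of the edges of $F$ (or through sub-arcs of $C$ of odd negative parity), closed up along $H$-arcs of $J$ and already-covered arcs, and $P$ is a connecting path routed through the remaining edge(s) of $F$ and through edges already covered by the balanced circuits above. Done carefully, $|E(B)|$ is at most a bounded constant, plus a few already-covered edges, plus one further short $H$-path whose length is again controlled by Lemma~\ref{LEM: T-join}. Adding the two length estimates and using $|E(C)|\le\tfrac23|E(G)|$ and $|E(H)|\le|E(G)|$ gives $\ell(\mathcal F)\le\tfrac89|E(G)|+|E(C)|$. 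Besides the bridge/$2$-edge-connectivity reduction, the second main obstacle — and the place where the construction has to work hardest — is exactly this clean-up step: realizing $B$ as an \emph{honest} barbell (two vertex-disjoint unbalanced circuits joined by a path) whose length stays within the remaining budget, which is what dictates the precise choice of $F$, of the cut-points, and the heavy reuse of the already-covered balanced circuits.
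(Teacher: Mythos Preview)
Your approach has a genuine gap at the $T$-join step, and it does not seem repairable along the lines you sketch. You set $G^{\ast}=G/C$ and ask for a minimum $T$-join $J$ of $G^{\ast}$ with $T$ equal to ``the set of end-vertices of the arcs of $C\setminus F$''. But in $G^{\ast}$ all vertices of $C$ have been identified to a single vertex $c$, so those arc-endpoints no longer exist separately; the only parity a $T$-join in $G^{\ast}$ controls is $d_J(c)\bmod 2$, i.e.\ the \emph{total} number of $J$-edges leaving $C$, not the parity at each individual vertex of $C$. Hence there is no reason for $(C\setminus F)\cup J$ to be an even subgraph of $G$: at an interior arc-vertex $v$ you need $d_J(v)=0$ and at an arc-endpoint you need $d_J(v)=1$, and neither is enforced. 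Without evenness there is no circuit decomposition, and the balancedness claim is moot. Taking the $T$-join in $H=G-E(C)$ instead does not help, since $H$ is typically disconnected (so two chosen arc-endpoints need not lie in the same component) and certainly not $2$-edge-connected, so Lemma~\ref{LEM: T-join} is unavailable; taking it in $G$ lets $J$ use edges of $C$, destroying the parity argument. On top of this, the barbell clean-up that you flag as the hardest step is left entirely unspecified.

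The paper's proof avoids all of this by working directly with the component structure of $H=G-E(C)$, which is exactly the information your contraction throws away. Each component $M$ of $H$ cuts $C$ into segments, and since $M\cup C$ is unbalanced, $M$ determines an odd number of negative segments. If some $M$ has at least three negative segments, two short paths in $M$ give three balanced circuits $B_1,B_2,B_3$ such that each pair $\{B_i,B_j\}$ already covers $C$; averaging yields the bound. Otherwise every component has a unique negative segment, and one shows (Claims~\ref{CLM: cl1} and~\ref{CLM: cl2}, taken from~\cite{2021Cubic}) that the complementary \emph{cosegments} cover $C$ with multiplicity at most two; closing each cosegment through a path in its own component produces a family of balanced circuits covering $C$ within the stated length. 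No barbell is needed at all.
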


\begin{proof}
Let $G^{\prime}=G-E(C)$. Since $G^{\prime}$ is balanced, with some switching operations,  we may assume that all edges in $E(G^{\prime})$ are positive and thus $E_{N}(G,\sigma) \subseteq E(C)$.

	Let $M$ be a component of $G^{\prime}$.  The circuit $C$ was divided by the vertices of $M$ into
	 pairwise edge-disjoint paths (called segments) whose end-vertices lie in $M$ and all inner vertices lie
	in $C$. An end-vertex of a segment is called an attachment of $M$. A segment is called positive (negative, resp.) if it contains an even (odd, resp.) number of negative edges. Note that $M \cup S$
	is unbalanced (balanced, resp.) if and only if the segment $S$ is negative (positive, resp.). Since $M\cup C$ is unbalanced, the number of negative segments determined by $M$ is odd.

\vspace{5pt}
\noindent{\textbf{Case 1.}} {\em There exists a  component $M$ of  $G^{\prime}$ that determines more than one negative segments.}
	
Then in this case $M$ determines at least three  negative segments and so $|E(C)|\geq 3$.
	    Let $u_{1}Cv_{1}, u_{2}Cv_{2}, u_{3}Cv_{3}$ be three consecutive negative segments (in clockwise order) where $u_{i}$ and $v_{i}$ are attachments for $i=1, 2,3$. Then $v_{1}Cu_{2}, v_{2}Cu_{3}, v_{3}Cu_{1}$, where each of them contains even number of negative edges. This implies that $C$ can be partitioned into three pieces: $u_{1}Cu_{2}, u_{2}Cu_{3}$, and $u_{3}Cu_{1}$ all contain odd number
	   of negative edges. Note that $u_{i}$ and $u_{j}$ are not adjacent in $C$ for distinct $i,j \in \{ 1,2,3\}$ since $G$ is cubic.
	   Let $P_{1}$ be a $(u_{1},u_{2})$-path in $M$. Since $M$ is connected, there is a path $P_{2}$ from $u_{3}$ to $P_{1}$ such that $|V (P_{2}) \cap V (P_{1})| = 1$. Let $v$ be the only common vertex in $P_{1}$ and $P_{2}$.
	   Then $C, P_{1}, P_{2}$ form a signed graph $H_{1}$ as illustrated in Figure 1.
	   \begin{figure}
		\centering
		\includegraphics[scale=0.28]{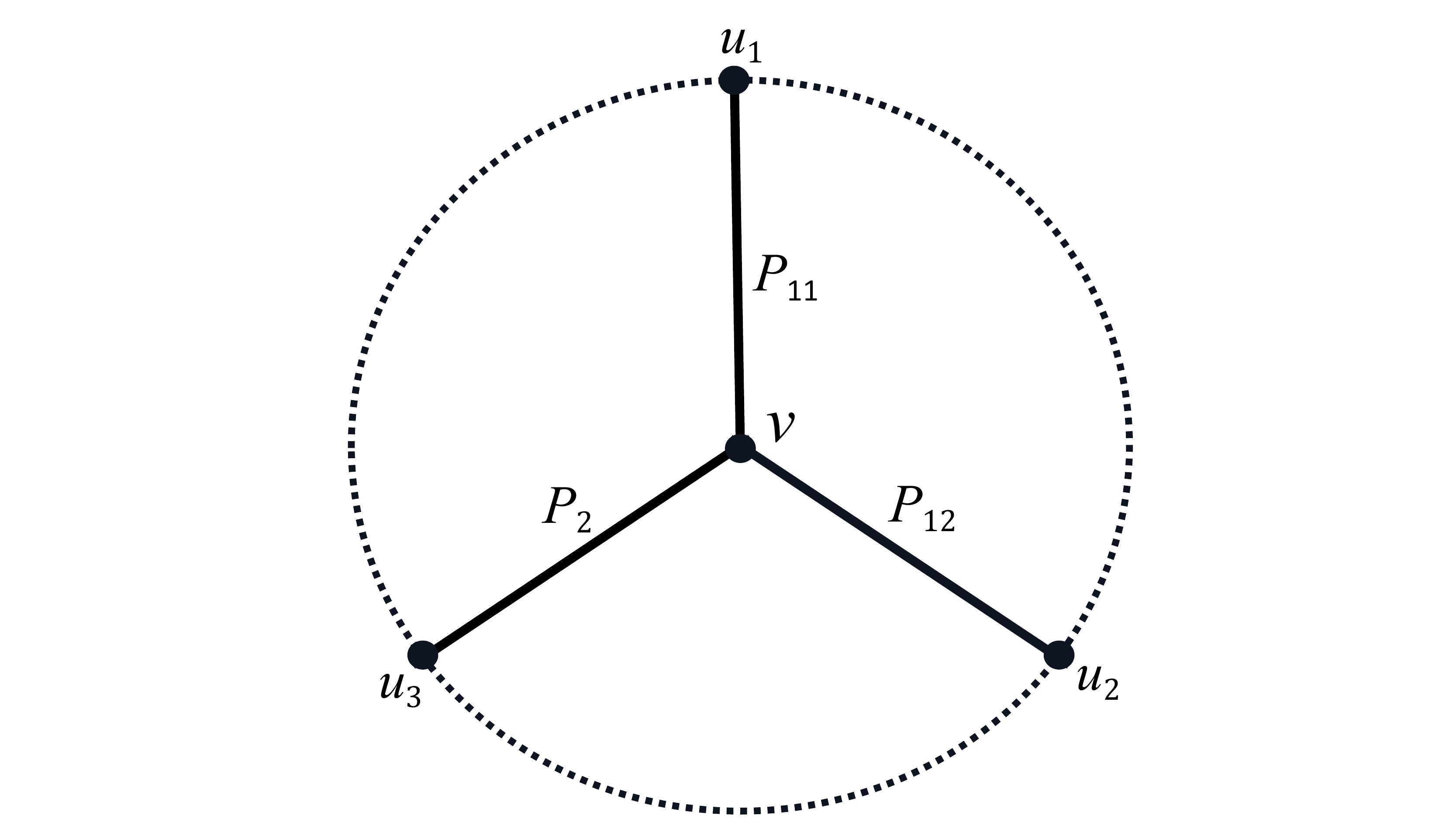}
		\caption{$H_{1}= C\cup P_{11}\cup P_{12}\cup P_{2} $, negative segments are dashed.}
		\label{Figure 1}
	   \end{figure}

Note that $|E(H_{1})|\leq \frac{2}{3}|E(G)|+2$ since $G$ is cubic and  there are exactly four vertices of degree $3$ in $H_{1}$. Divide $P_{1}$ into two pieces in $M$: $u_{1}P_{11}v$ and $vP_{12}u_{2}$, denote by $B_{1}=u_{1}P_{11}v\cup vP_{12}u_{2}\cup u_{2}Cu_{1},B_{2}=u_{3}P_{2}v\cup vP_{11}u_{1}\cup u_{1}Cu_{3},B_{3}=u_{2}P_{12}v\cup vP_{2}u_{3}\cup u_{3}Cu_{2}$.
Note that each  of $u_{2}Cu_{1}, u_{1}Cu_{3}, u_{3}Cu_{2}$ contains even number of negative edges.
So $B_{1},B_{2},B_{3}$ are all balanced circuits and $\mathcal{F}_{1}=\{B_{1},B_{2}\}$, $\mathcal{F}_{2}=\{B_{2},B_{3}\}$, $\mathcal{F}_{3}=\{B_{3},B_{1}\}$ are all sign-circuit covers of $H_{1}$, which are also sign-circuit covers of $C$ since $E(C)\subset E(H_{1})$. Note that $\mathcal{F}=\{\mathcal{F}_{1},\mathcal{F}_{2},\mathcal{F}_{3}\}$ covers edge edge of $H_{1}$ exactly $4$ times. So we have
		\begin{align*}
			\min\{ \ell(\mathcal{F}_{1}), \ell(\mathcal{F}_{2}), \ell(\mathcal{F}_{3})\}&\leq \frac{1}{3}\left(\ell(\mathcal{F}_{1})+\ell(\mathcal{F}_{2})+\ell(\mathcal{F}_{3})\right)\\
			    &=\frac{4}{3}|E(H_{1})|\\
				&\leq \frac{4}{3}\left(\frac{2}{3}|E(G)|+2\right)\\
				&< \frac{8}{9}|E(G)|+|E(C)|.
		\end{align*}

~

\noindent{\textbf{Case 2.}} {\em Each component of  $G^{\prime}$ determines exactly one negative segment.}

Let $\mathcal{M}$ denote the  set of all components  of  $G^{\prime}$. For each component $M$, denote by
	    $S_{M} = uCv$ the negative segment determined by $M$ where $u$ and $v$ are two attachments of $M$ on $C$. Denote by $S'_{M}= vCu$ the cosegment of $S_{M}$, which is the complement of $S_M$ on $C$. Then $E(S_{M})\neq \emptyset $ and
	    $S'_{M}= E(C)-E(S_{M})$. We have the following two conclusions:
\begin{claim}[see Claim 3.7.2 in~\cite{2021Cubic}]\label{CLM: cl1}
 $$\cap_{M\in \mathcal{M}}E(S_{M})=\emptyset,$$ or equivalently, $\cup _{M\in \mathcal{M}}E(S'_{M})=C$ and $|\mathcal{M}|\geq 2$.
\end{claim}
		Let $\mathcal{S}=\{S^{\prime}_{1},S^{\prime}_{2},...,S^{\prime}_{t}\}$ be a minimal cosegment cover of $C$. We have
		\begin{claim}[see Claim 3.7.3 in~\cite{2021Cubic}]\label{CLM: cl2}
 For any edge $e\in E(C)$, $e$ is contained in at most two cosegments.
		\end{claim}
\begin{proof}[Proof Sketches of Claims 1 and 2:] For the sake of completeness, we present the proof sketches of this two claims here. Suppose to the contrary  $\cap_{M\in \mathcal{M}}E(S_{M})\ne \emptyset$ and $e^{*}\in \cap_{M\in \mathcal{M}}E(S_{M}).$
 Then there is a spanning tree $T$ of $G-e^*$ containing the path $P^* = C -e^*$. Let $e = uv \in E(G)-e^*-E(T )$.
 Denote the unique circuit contained in $T + e$ by $C_e$.

 If $E(C_e)\cap E(P^*) = \emptyset$, then $C_e$ contains no negative edges and thus is balanced. Otherwise since $T$ contains all the edges in $C-e^*$,
 $E(C_e) \cap E(C)$ is a path $P$ on $C$. let $u^{\prime}$ and $v^{\prime}$ be the two end-vertices of $P$ in clockwise order on $C.$ Then $C_e-V (P) + {u^{\prime}, v^{\prime}}$ is a also a path and thus it is contained in some component $M \in \mathcal{M}.$
 This implies that $u^{\prime}$  and $v^{\prime}$ are two attachments of $M$ on $C.$  Since $e^*$ belongs to the only negative segment of $C$  determined by $M$, ${u^{\prime}Cv^{\prime}}$  is the union of some
 positive segments of $C$  determined by  $M$.  Therefore $C_e$ has an even number of negative
 edges and thus is balanced. By  Lemma~\ref{LEM: spanningtree}, $G-e^*$ is balanced, contradicting  Lemma~\ref{LEM: flowadmissible}.
 This proves $\cap_{M\in \mathcal{M}}E(S_{M})= \emptyset$.
 Since $E(S^{\prime}_M)=E(C)-E(S_M)$ and $\cap_{M\in \mathcal{M}}E(S_{M})= \emptyset$, we have $\cup_{M\in \mathcal{M}}E(S^{\prime}_M)= C$.
 Since $E(S_M)\ne \emptyset$ and $\cap_{M\in \mathcal{M}}E(S_{M})= \emptyset$, we have $|\mathcal{M}|\geq 2$. This completes the proof of the claim 1.

 Suppose to the contrary that there exists an edge $e = uv$ that belongs to three cosegments $L_1,L_2,L_3$ of $\mathcal{S}$. Denote $L_i=u_iCv_i$ for each $i=1,2,3$. Without loss
 of generality, we may assume that $u_2$ belongs to $u_1Cu$. Then $v_2$ doesn’t belong to $u_1Cv_1$ (see Figure 2). Note that $v_3$ belongs to $u_1Cu_3$.
 If $u_3$ belongs to $u_1Cu$, then both $v_3$ and $u_3$ belongs to $u_1Cv_1$ and thus $L_1 \cup L_3 = C$ (see Figure 2-(a)), contradicting the minimality of $\mathcal{S}$.
 If $u_3$ doesn’t belong to $u_1Cu$, then $u_3$ belongs to $vCv_2$. Since $L_3$ contains $uv$, $v_3$ belongs to $vCv_2$. Thus both $v_3$ and $u_3$ belongs to $u_2Cv_2$. Therefore $L_2 \cup L_3 = C$ (see Figure 2-(b)), also contradicting the minimality of $\mathcal{S}$. This completes the proof of the claim 2.
\end{proof}

 \begin{figure}
	\centering
   \includegraphics[scale=0.28]{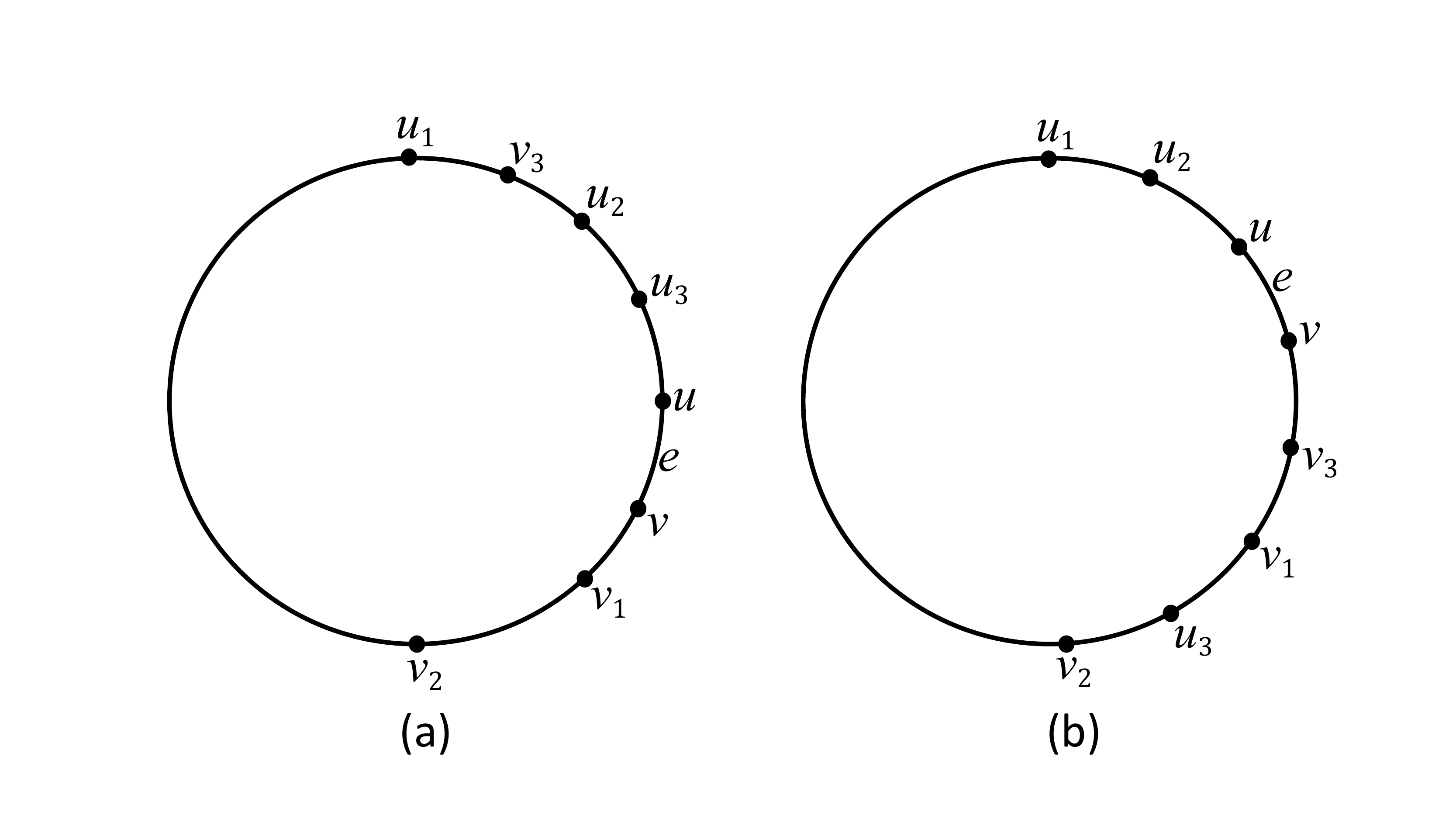}
   \caption{Three cosegments sharing an edge.}
   \label{Figure 2}
\end{figure}

		For each $i= 1,...,t$, denote by $S^{\prime}_{i} = x_{i}Cy_{i}$ and let $P_{i}$ be a path in $M_{i}$ connecting $x_{i}$ and $y_{i}$. Then $C_{i}=S^{\prime}_{i} \cup P_{i}$ is a balanced Eulerian subgraph. By Claims~\ref{CLM: cl1} and \ref{CLM: cl2}, we may assume that the vertices $x_{1}, y_{t}, x_{2}, y_{1},..., x_{t}, y_{t-1}, x_{1}$ appear on $C$ in clockwise order.  Then $C_{i}\cap C_{j} \neq \emptyset$  if and only if $|j - i| \equiv 1 \pmod t$. Moreover $|C_{i} \cap C_{i+1}| = x_{i+1}Cy_{i}$,
  where the sum of the indices are taken modulo $t$. See Figure 3 for an illustration with $t = 5$.
\begin{figure}
 	\centering
	\includegraphics[scale=0.28]{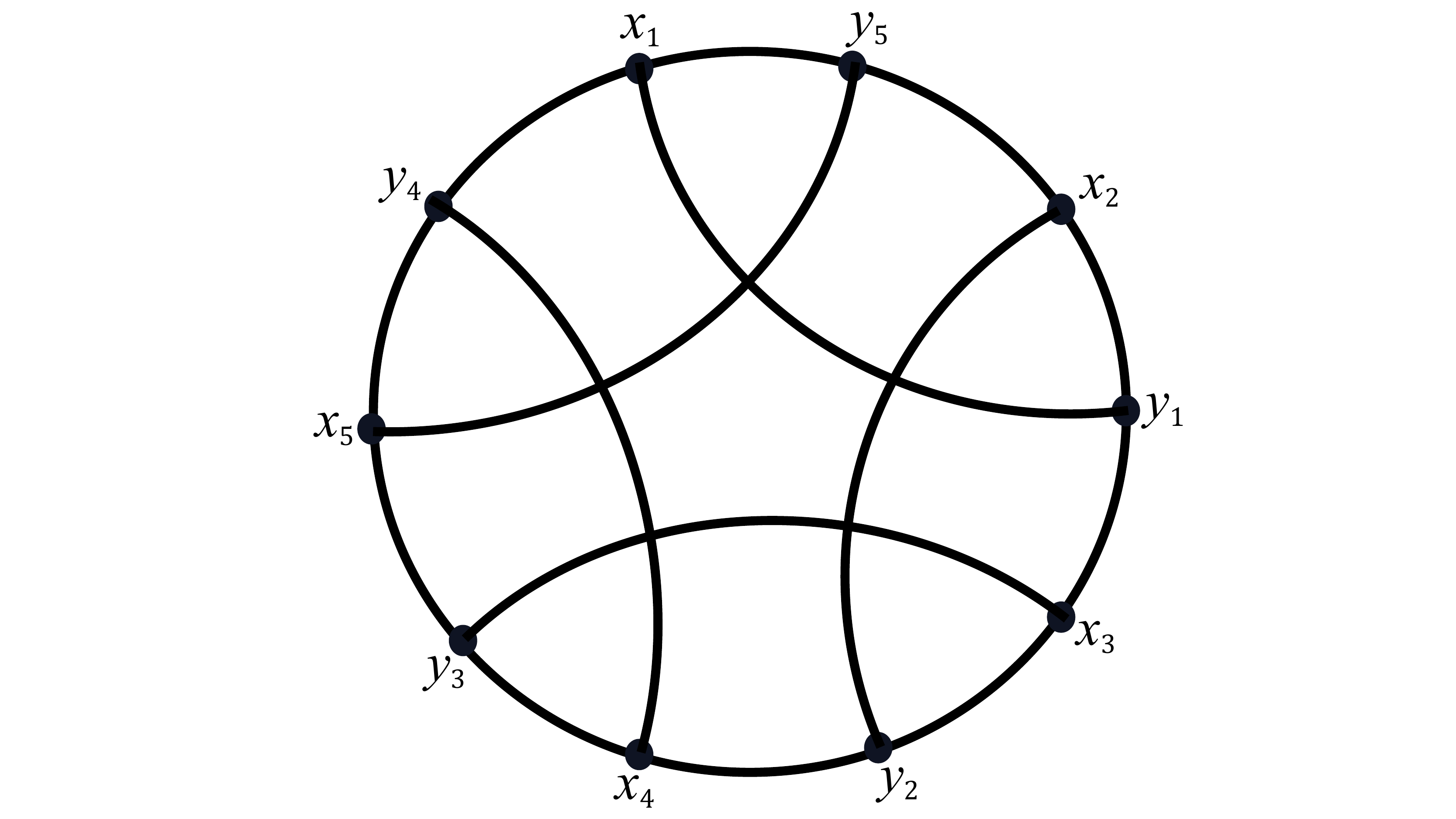}
	\caption{Minimal cosegment cover of $C$ with $t=5$.}
	\label{Figure 3}
\end{figure}

Let $H_{2}=C \cup P_{1} \cup P_{2} \cup ...\cup P_{t}$ and $B_{i}=x_{i}Cy_{i}\cup y_{i}P_{i}x_{i}$ for $i=1,2,...,t$. Note that $B_{i}$ is a balanced circuit and so $\mathcal{F}=\{B_{1},B_{2},...,B_{t}\}$ is a sign-circuit cover of $C$.
	Obviously, $|E(H_{2})|\leq \frac{2}{3}|E(G)|+t$ since $G$ is cubic and  there are exactly $2t$ vertices of degree $3$ in $H_{2}$. By Claim~\ref{CLM: cl2}, $\mathcal{F}$ covers the edges in $C$ at most twice and edges in $P_{1} \cup \ldots\cup P_{t}$ exactly once. Let $W$ be the set of edges covered by $\mathcal{F}$ twice. Then $W\subset E(C)$. Since $G$ is cubic, $x_{i}\neq y_{j}$ for all $i,j\in\{1,2,...,t\}$.  So we have $|W|\leq |E(C)|-t$.
Therefore,
	\begin{align*}
	   \ell(\mathcal{F})&=|E(B_{1})|+|E(B_{2})|+...+|E(B_{t})|\\
	                 &\leq |E(H_{2})|+|E(C)|-t\\
					 &\leq \frac{2}{3}|E(G)|+t+|E(C)|-t\\
					 &\leq \frac{8}{9}|E(G)|+|E(C)|.
	\end{align*}

This completes the proof.
\end{proof}

\section{Proof of Theorem~\ref{THM: main}}
\begin{proof}[Proof of Theorem~\ref{THM: main}]
Let $f$ be a 3-edge coloring of connected cubic graph $G$.
Let $R, B, Y$ be the three color classes of $f$. Recall that $E_{N}(G, \sigma)$ is the set of negative edges in $(G, \sigma)$. Without loss of generality, we may assume $\left|R \cap E_{N}(G, \sigma)\right| \equiv\left|B \cap E_{N}(G, \sigma)\right|\pmod 2.$
	Denote by $M_{1}M_{2}$ the $2$-factor induced by $M_{1}\cup M_{2}$ for each pair $M_{1}, M_{2} \in \{R, B, Y \}$.
	Since $|R \cap E_{N}(G, \sigma)| \equiv |B \cap E_{N}(G, \sigma)| \pmod 2$, $RB$ has an even number of unbalanced components. By Lemma~\ref{10-9subcase1.1}, there exists a family of sign-circuits ${\mathcal{F}}_{1}$ covers $RB$ with length at most $\frac{10}{9}|E(G)|$.

\vspace{5pt}
\noindent{\textbf{Case 1.}} {\em $RB$ contains an unbalanced circuit.}

	First, assume that $|Y \cap E_{N}(G, \sigma)|$ has the same parity with $|R \cap E_{N}(G, \sigma)|$.
Then $RY$ has an even number of unbalanced circuits. By Lemma~\ref{10-9subcase1.1}, we can find a family of sign-circuits ${\mathcal{F}}_{2}$ covers $RY$ with length at most $\frac{10}{9}|E(G)|$.
			  Therefore, $\mathcal{F}={\mathcal{F}}_{1} \cup {\mathcal{F}}_{2}$ is a sign-circuit cover of $G$ with length
			  \begin{align*}
				\ell(\mathcal{F}) =\ell({\mathcal{F}}_{1}) +\ell({\mathcal{F}}_{2})
				               \leq \frac{10}{9}|E(G)| + \frac{10}{9}|E(G)|
							   =\frac{20}{9}|E(G)|.
			  \end{align*}
			
Then, assume instead that $|Y \cap E_{N}(G, \sigma)|$ has different parity with $|R \cap E_{N}(G, \sigma)|$.
              Let $C$ be an unbalanced circuit in $RB$.  Now we swap the colors $R$ and $B$ on $C$, i.e. reset $R^{\prime} = R \triangle E(C)$  and $B^{\prime} = B \triangle E(C)$ respectively. Note that the operation will change the parity of $|R \cap E_{N}(G, \sigma)|$ and $|B \cap E_{N}(G, \sigma)|$.  This implies that
              $|Y \cap E_{N}(G,\sigma)| \equiv |R^{\prime} \cap E_{N}(G,\sigma)| \equiv |B^{\prime} \cap E_{N}(G,\sigma)| \pmod 2$ now.  So, similar as the previous paragraph, we apply Lemma~\ref{10-9subcase1.1} to find a family of sign-circuits ${\mathcal{F}}_{2}'$ covers $R'Y$ with length at most $\frac{10}{9}|E(G)|$. Notice that ${\mathcal{F}}_{1}$ covers $RB=R'B'$ with length at most $\frac{10}{9}|E(G)|$. Hence $\mathcal{F}={\mathcal{F}}_{1} \cup {\mathcal{F}}_{2}'$ is a sign-circuit cover of $G$ with length at most $\frac{20}{9}|E(G)|$.

~

		    \item[\textbf{Case 2.}] {\em $RB$ contains no unbalanced circuit.}

			In this case, each circuit $C_{i}$ of $RB$ is a balanced circuit. Let ${\mathcal{F}}_{1}=\{ C_{i} |  C_{i}$ is a balanced circuit of $RB \}$. Then ${\mathcal{F}}_{1}$ is a family of  sign-circuits covering $RB$ with length $\ell(\mathcal{F}_1)=E(RB)=\frac{2}{3}|E(G)|$.

\medskip			
\noindent{\textbf{Subcase 2.1}:} {\em The number of unbalanced circuits in $RY$ or $BY$ is even.}
			
 By Lemma~\ref{10-9subcase1.1}, we have a family of sign-circuits ${\mathcal{F}}_{2}$ which covers $RY$ or $BY$ with length at most $\frac{10}{9}|E(G)|$. Therefore $\mathcal{F}={\mathcal{F}}_{1} \cup {\mathcal{F}}_{2}$ is a sign-circuit cover of $G$ with length $\ell(\mathcal{F})\le \frac{16}{9}|E(G)|$.

\medskip
\noindent{\textbf{Subcase 2.2}:} {\em The number of unbalanced circuits in $RY$ or $BY$ is equal to one.}

				 Without loss of generality, assume that $RY$ has exactly one unbalanced component, say, $C_{1}$. Let $\mathcal{C} = \{C_{1},..., C_{m}\}$ be the set of components of $RY$, where each $C_{i}\,  (i\geq 2)$ is balanced.
				 Let ${\mathcal{F}}_{2}=\{ C_{i} :  i\geq 2\}$. Then ${\mathcal{F}}_{2}$ is a family of sign-circuits covering $RY-E(C_{1})$ with length $\frac{2}{3}|E(G)|-|E(C_1)|$. We consider the following two cases in order to cover $C_{1}$.

Assume first that $G$ contains an unbalanced circuit $C^{\prime}$ with $E(C')\cap E(C_{1})=\emptyset$.				
					Since $G$ is cubic and connected, there is a long barbell $Q$ in $G$
                    with $P$ as the path connecting $C_{1}$ and $C^{\prime}$ with $|E(Q)\leq \frac{2}{3}|E(G)|+1$.
                   Therefore, $\mathcal{F}={\mathcal{F}}_{1} \cup {\mathcal{F}}_{2} \cup Q$ is a sign-circuit cover of  $G$ with length
                \begin{align*}
	                \ell(\mathcal{F}) &=\ell({\mathcal{F}}_{1}) +\ell({\mathcal{F}}_{2}) +\ell(Q) \\
					               &\leq \frac{2}{3}|E(G)| + \frac{2}{3}|E(G)| -|E(C_1)| + \frac{2}{3}|E(G)| +1\\
								   &< 2|E(G)|.
				\end{align*}

Then assume instead that $G$ contains no unbalanced circuit $C^{\prime}$ with $E(C')\cap E(C_{1})=\emptyset$.
					In this case, $G-E(C_{1})$ is balanced. By Lemma~\ref{LEM: tech1}, there exists a family ${\mathcal{F}}_{3}$ of sign-circuits covering $E(C_1)$ with length at most $\frac{8}{9}|E(G)|+|E(C_1)|$. Therefore, $\mathcal{F}={\mathcal{F}}_{1} \cup {\mathcal{F}}_{2} \cup {\mathcal{F}}_{3}$ is a sign-circuit cover of $G$ with length
					\begin{align*}
						\ell(\mathcal{F}) &= \ell({\mathcal{F}}_{1}) + \ell({\mathcal{F}}_{2}) +\ell({\mathcal{F}}_{3}) \\
									   &\leq \frac{2}{3}|E(G)| + \frac{2}{3}|E(G)| -|E(C_1)| + \frac{8}{9}|E(G)| +|E(C_1)|\\
									   &\leq \frac{20}{9}|E(G)|.
					\end{align*}

\medskip
\noindent{\textbf{Subcase 2.3}:} {\em The number of unbalanced circuits in each of $RY$, $BY$  is odd and is at least $3$.}

                Let $\mathcal{C} = \{C_{1},..., C_{m}\}$ be the set of components of $RY$.
				Denote by $G^{*}$ the graph obtained from $G$ by contracting each circuit $C_{i}$ of $RY$ to a single vertex $u_{i}$, where $i=1,2,...,m$. Note that $G^{*}$ is connected, and so let $T^{*}$ be a spanning tree of $G^{*}$. Then $T^{*} \cup RY$ is a cycle-tree in $G$, denoted by $H$, containing at least $3$ unbalanced circuits.
Let $B'$ be the set of bridges of $H$ such that $b_{i} \in B'$ if and only if $(H-b_{i}, \sigma |_{H-b_{i}})$ has a balanced component $H_{i}$. $B=\emptyset$ if no such bridge exists in $H$. Let $H^{\prime}= H-(B'\cup (\cup_{i=1,...,|B'|}E(H_{i}))$. Note that $H^{\prime}$ contains all the unbalanced circuit of $\mathcal{C}$. By Lemmas~\ref{LEM: flowadmissible} and~\ref{LEM: scc1}(a), $H^{\prime}$ is flow-admissible and has a sign-circuit cover ${\mathcal{F}}_{2}$ with length at most $\frac{3}{2}|E(H^{\prime})|$.
				Since the circuits in $H_{i}$ are all balanced circuits, we can cover them with length at most $|\cup_{i=1,...,|B'|}E(H_{i})|\le |E(H)|-|E(H^{\prime})|$. Thus we have a sign-circuit cover $\mathcal{F}_{3}$  of $RY$ with length
				\begin{align*}
				  \ell(\mathcal{F}_{3}) &= \ell(\mathcal{F}_{2}) + (|E(H)|-|E(H^{\prime})|)\\
                                        &\leq \frac{3}{2}|E(H^{\prime})| + |E(H)|-|E(H^{\prime})|\\
                                     & \leq \frac{3}{2}|E(H)|
                                      < \frac{3}{2}|E(G)|.
				\end{align*}
Therefore, $\mathcal{F} = \mathcal{F}_{1} \cup \mathcal{F}_{3}$ is a sign-circuit cover of $G$ with length
				\begin{align*}
				\ell(\mathcal{F}) &= \ell(\mathcal{F}_{1}) +\ell(\mathcal{F}_{3})\\
                                  & \leq \frac{2}{3}|E(G)| +\frac{3}{2}|E(G)|\\
				                   &= \frac{13}{6}|E(G)|
                                  < \frac{20}{9}|E(G)|.
				\end{align*}
	This completes the proof.
\end{proof}
\hspace{-6.5mm}\textbf{Remark.} The upper bound of $scc(G)$ in Theorem~\ref{THM: main} seems not to be tight. We realized that the 3-edge colorable cubic signed graph $(G, \sigma)$ as illustrated in Figure 4 has a sign-circuit cover with length $\frac{13}{9}|E(G)|$. The problem to determine the optimal upper bound for the shortest sign-circuit cover of 3-edge colorable cubic signed graph remains open.

\begin{figure}
	\centering
	\includegraphics[scale=0.26]{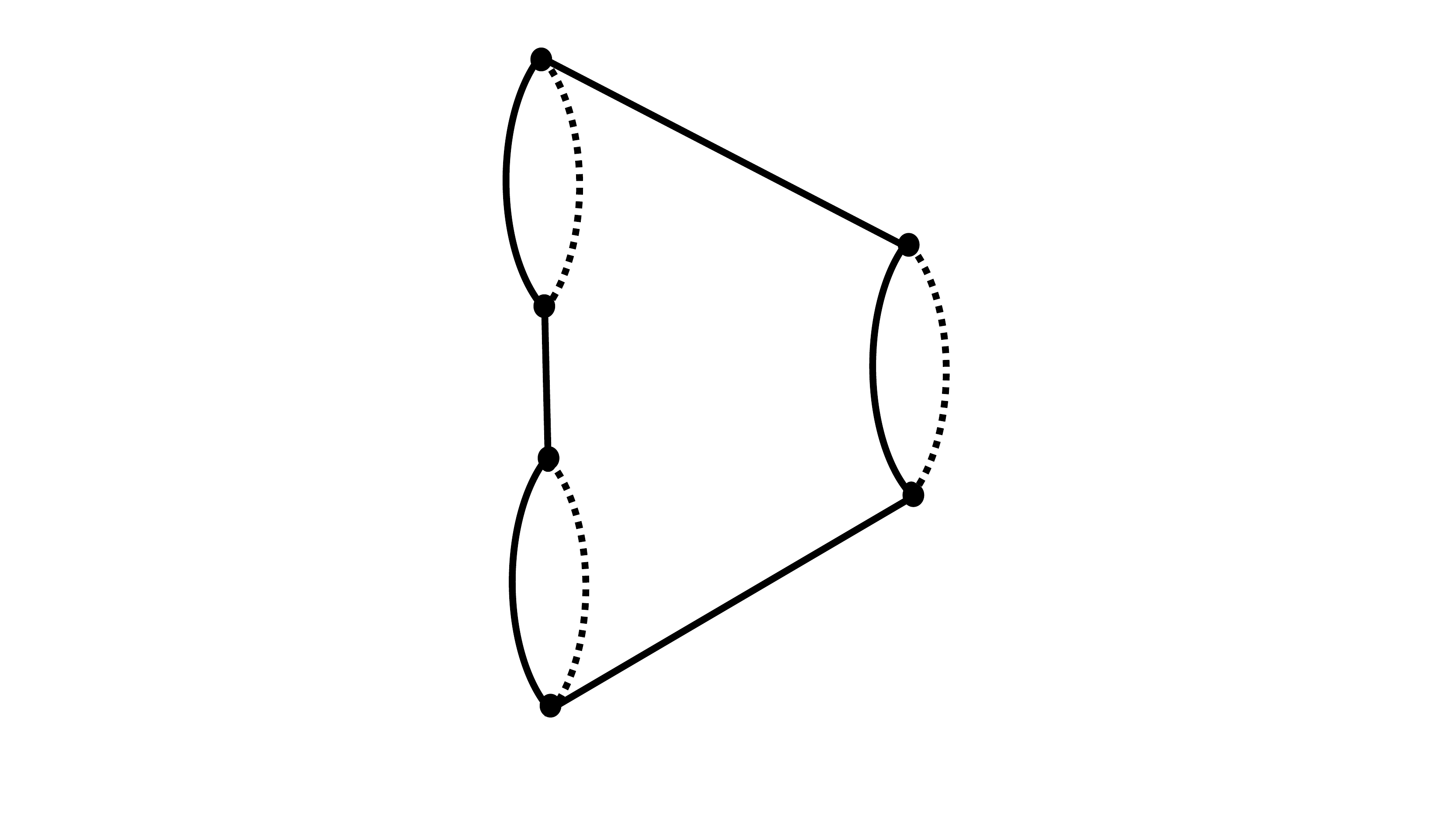}
	\caption{$(G, \sigma)$ with a shortest circuit cover with length $\frac{13}{9}|E(G)|$.}
	\label{Figure 4}
\end{figure}

\hspace{-6.5mm}\textbf{Data Availability:}
Data sharing not applicable to this article as no datasets were generated or analysed during the current study.

\end{document}